\documentclass[12pt,twoside]{amsart}

\usepackage{mathtools}
\mathtoolsset{showonlyrefs}

\usepackage{amsmath}
\usepackage{amssymb}
\usepackage{amsfonts}
\usepackage{amsthm}
\usepackage{bbm}
\usepackage[top=1in, bottom=1in, left=1in, right=1in]{geometry}
\theoremstyle{plain}
\newtheorem{theorem}{Theorem}[section]
\newtheorem{lemma}[theorem]{Lemma}
\newtheorem{proposition}[theorem]{Proposition}

\newtheorem{conjecture}[theorem]{Conjecture}
\newtheorem*{theorem*}{Theorem}
\newtheorem*{lemma*}{Lemma}
\newtheorem*{proposition*}{Proposition}
\newtheorem*{corollary*}{Corollary}
\newtheorem*{definition*}{Definition}
\newtheorem*{conjecture*}{Conjecture}

\numberwithin{equation}{section}

\usepackage{hyperref} 
\hypersetup{
    colorlinks=true,       
    linkcolor=blue,          
    citecolor=magenta,        
    filecolor=magenta,      
    urlcolor=cyan           
}

\title{On the Fejes T\'{o}th Problem about the sum of angles between lines}

\author{Dmitriy Bilyk}
\address{School of Mathematics, University of Minnesota, Minneapolis, MN 55408, USA.}
\email{dbilyk@math.umn.edu, matzk053@umn.edu}

\author{Ryan W Matzke}

\begin{document}

\maketitle

\begin{abstract}
In 1959 Fejes T\'oth posed a conjecture that the sum of pairwise non-obtuse angles between   $N$ unit vectors in $\mathbb S^d$  is maximized by periodically repeated elements of the standard orthonormal basis. We obtain new improved upper bounds for this sum, as well as for the corresponding energy integral. We also provide several new approaches to the only settled case of the conjecture: $d=1$. 
\end{abstract}

\section{Introduction}

 Fejes  T\'{o}th  has formulated a  variety of problems and conjectures about point distributions on the sphere. In particular, in 1959  he posed the following question \cite{A}:  
what is the maximal  value of the  sum of pairwise acute angles defined by $N$ vectors in the sphere $\mathbb S^d$?  More precisely, determine which $N$-element point sets $ Z = \{ z_1, \dots, z_N \}  \subset\mathbb{S}^d$ maximize the discrete energy 
\begin{equation}\label{e.DE}
E(Z) = \frac{1}{N^2} \sum_{i,j = 1}^{N} \arccos  | z_i \cdot z_j | =  \frac{1}{N^2} \sum_{i,j = 1}^{N}  \min \big\{ \arccos(  z_i \cdot z_j ), \pi - \arccos(  z_i \cdot z_j )   \big\}.
\end{equation}
He conjectured that this sum is maximized by the periodically repeated copies of the standard orthonormal basis:
\begin{conjecture}[Fejes  T\'{o}th, 1959 \cite{A}]\label{c1}  Let $d\ge 1$ and $N = m(d+1) + k$ with $ m \in \mathbb N_0$ and $0 \leq k \leq d$. Then the discrete energy \eqref{e.DE} on the $d$-dimensional sphere $\mathbb S^d$ 
is maximized  by the point  set $Z= \{z_1, \dots, z_N \} \subset \mathbb S^d$ with $z_{p(d+1)+ i } = e_i$, where $\{e_i \}_{i=1}^{d+1}$ is the standard orthonormal basis of $\mathbb R^{d+1} $, i.e. $Z$  consists  of $m+1$ copies of $k$ elements of the orthonormal basis of $\mathbb{R}^{d+1}$ and $m$ copies of the remaining $d+1-k$ basis elements. In this case, 
\begin{equation}\label{e.c1}
E (Z) = \frac{\pi}{2N^2} \left(k(k-1)(m+1)^2 +  2km (d+1-k)(m+1) + (d-k)(d+1-k)m^2  \right).
\end{equation}
In particular, if $N = m (d+1)$, the sum is maximized by $m$ copies of the orthonormal basis: 
 \begin{equation}\label{e.c1a}
 \max_{ \substack{Z\subset \mathbb S^d  \\ \# Z = N} } E (Z) =  \frac{\pi }{2  } \cdot \frac{ d}{d+1 } .
 \end{equation}
\end{conjecture}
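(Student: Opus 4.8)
The plan is to attack the problem through the \emph{linear programming} (Delsarte--Yudin) method for energy \emph{maximization}. Writing $\lambda = (d-1)/2$ and using the Gegenbauer polynomials $C_n^{\lambda}$, normalized so that $C_0^{\lambda}\equiv 1$ and so that they are positive definite on $\mathbb S^d$ (meaning $\sum_{i,j} C_n^{\lambda}(z_i\cdot z_j)\ge 0$ for every point set, by the addition theorem), the key is to produce an auxiliary function $g\colon[-1,1]\to\mathbb R$ that majorizes the angle kernel, $g(t)\ge\arccos|t|$ for all $t$, and whose Gegenbauer expansion $g(t)=\sum_{n\ge0}\hat g_n\,C_n^{\lambda}(t)$ satisfies $\hat g_n\le 0$ for all $n\ge1$ together with $\hat g_0=\frac{\pi}{2}\cdot\frac{d}{d+1}$. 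Granting such a $g$, for any $Z=\{z_1,\dots,z_N\}$ one obtains
\[ E(Z)=\frac{1}{N^2}\sum_{i,j}\arccos|z_i\cdot z_j| \;\le\; \frac{1}{N^2}\sum_{i,j} g(z_i\cdot z_j) = \sum_{n\ge0}\hat g_n\,\frac{1}{N^2}\sum_{i,j}C_n^{\lambda}(z_i\cdot z_j) \;\le\; \hat g_0 = \frac{\pi}{2}\cdot\frac{d}{d+1}, \]
which is exactly \eqref{e.c1a}. Equality then forces $g(z_i\cdot z_j)=\arccos|z_i\cdot z_j|$ for all realized inner products and $\sum_{i,j}C_n^{\lambda}(z_i\cdot z_j)=0$ whenever $\hat g_n<0$; analyzing these constraints should recover the orthonormal-basis extremizer, whose inner products satisfy $|z_i\cdot z_j|\in\{0,1\}$ so that the majorant need only be tight at $t=0$ and $t=\pm1$.

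I would first carry out this program in the settled case $d=1$, where it becomes transparent. On $\mathbb S^1$ a unit vector spans a line through the origin, and $\arccos|z_i\cdot z_j|$ is precisely the geodesic distance between the two lines on $\mathbb{RP}^1$, a circle of circumference $\pi$. Parametrizing by $x\in\mathbb R/\pi\mathbb Z$, the kernel becomes the tent function $g(x)=\min(|x|,\pi-|x|)$, whose Fourier expansion has constant term $a_0=\pi/4$ and cosine coefficients $a_{2j}=0$ and $a_{2j+1}=-\tfrac{2}{\pi(2j+1)^2}<0$. Thus every nonzero frequency contributes nonpositively, and with $S_k=\sum_j e^{2ikt_j}$ one has the exact identity
\[ \sum_{i,j}\arccos|z_i\cdot z_j| = a_0 N^2 + \sum_{k\ge1} a_k\,|S_k|^2 \;\le\; \frac{\pi}{4}N^2, \]
so $E(Z)\le\frac{\pi}{4}=\frac{\pi}{2}\cdot\frac12$. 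For $N=2m$ the clustering at two orthogonal lines makes every odd $S_k$ vanish, so the bound is attained and \eqref{e.c1a} (with its $d=1$ instance of \eqref{e.c1}) follows.

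Two genuine difficulties remain. First, the bound above is quadratic in $N$ and tight only when $(d+1)\mid N$; for the general count $N=m(d+1)+k$ the exact value \eqref{e.c1} is strictly smaller, and capturing the dependence on $k$ requires supplementing the positive-definiteness estimate with a discrete refinement that respects the integrality of the cluster sizes. Already for $d=1$ and $N=2m+1$ the conjectured maximum is $\pi m(m+1)$, a deficit of exactly $\pi/4$ below $\frac{\pi}{4}N^2$, and one must show this deficit is forced because the frequencies $S_k$ cannot all vanish for an odd number of points. Second --- and this is the main obstacle --- for $d\ge2$ the naive choice $g=\arccos|t|$ fails: a direct computation of the uniform average gives, for instance at $d=2$, $\int_{\mathbb S^2}\arccos|x\cdot y|\,d\sigma(y)=1<\frac{\pi}{3}=\frac{\pi}{2}\cdot\frac{d}{d+1}$, so the kernel itself is \emph{not} Gegenbauer-negative-definite and the uniform measure is not extremal. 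One must therefore construct a strict majorant $g$ that is simultaneously tangent to $\arccos|t|$ at $t=0$ and $t=\pm1$, has Gegenbauer constant term exactly $\frac{\pi}{2}\cdot\frac{d}{d+1}$, and has all higher coefficients nonpositive. Producing such a function for every $d$ --- or showing it cannot exist, so that a higher-order (three-point or semidefinite) relaxation is unavoidable --- is precisely the crux that keeps the conjecture open for $d\ge2$, and I expect the bulk of the effort to go into this majorant construction together with the accompanying equality analysis.
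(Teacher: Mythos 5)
First, a framing point: the statement you were asked to prove is Conjecture \ref{c1}, which is \emph{open} for $d\ge 2$; the paper itself contains no proof of it, only the upper bound of Theorem \ref{t.main} for $d \geq 2$, a dimension-reduction observation, and several proofs of the settled case $d=1$. Your proposal is appropriately honest about this, and what you do establish is correct and runs closely parallel to the paper. Your $d=1$ argument --- expanding the tent kernel $\min(|x|,\pi-|x|)$ on $\mathbb{RP}^1$ and using nonpositivity of all nonzero Fourier coefficients together with $|S_k|^2\ge 0$ --- is, up to the doubled-angle change of variable, exactly the paper's Chebyshev and Fourier proofs in \S\ref{s.d1}: your coefficients $a_k=-\tfrac{2}{\pi k^2}$ for odd $k$ agree with \eqref{e.cheb}, and your verification that the odd frequencies vanish for $m+m$ points on two orthogonal lines is the content of Lemma \ref{l.points} applied with $N=4$. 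Likewise, your Delsarte--Yudin scheme for $d\ge 2$ literally contains the paper's proof of Theorem \ref{t.main} as its degree-two instance: the quadratic majorant $\tfrac{\pi}{2}-\tfrac{69}{50}t^2\ge \arccos|t|$ of \eqref{e.mainineq} has nonpositive Gegenbauer coefficients apart from the constant term (the frame-potential bound $I_{t^2}(\mu)\ge \tfrac1{d+1}$ of Lemma \ref{l.frame} is precisely the positive definiteness of the degree-two component), and the paper explicitly notes this method cannot reach the conjectured constant --- which matches your diagnosis of the crux. Your computation $I(\sigma)=1<\tfrac{\pi}{3}$ on $\mathbb S^2$, showing the kernel is not negative definite for $d\ge 2$, is also exactly the paper's closing remark that \eqref{e.all} fails in higher dimensions.

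The one place where your treatment falls short of what is actually \emph{settled} is $d=1$ with odd $N$ (and the equality analysis behind \eqref{e.c1}): you correctly compute the deficit of $\tfrac{\pi}{4}$ below $\tfrac{\pi}{4}N^2$ but leave open why the odd frequencies $S_k$ cannot all vanish. The paper closes this with a different tool, the Stolarsky principle for antipodal quadrants (Lemma \ref{l.stol}): the identity \eqref{e.stol2} converts maximization of $E(Z)$ into minimization of the $L^2$ discrepancy \eqref{e.disc}, and since for odd $N$ the integer count $\#\{Q(x)\cap Z\}$ differs from $N/2$ by at least $\tfrac12$ for every admissible $x$, one gets $D^2_{L^2,\textup{quad}}(Z)\ge \tfrac{1}{4N^2}$ and hence $E(Z)\le \tfrac{\pi}{4}\cdot\tfrac{N^2-1}{N^2}$, together with a clean characterization of maximizers (each quadrant pair splits $Z$ as evenly as parity allows). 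This integrality mechanism is exactly the ``discrete refinement'' you say is needed, so if you wish to complete your Fourier route for general $N$ at $d=1$, the discrepancy reformulation is the missing ingredient. For $d\ge 2$ your program stalls exactly where the field does: no majorant tangent at $t=0$ and $t=\pm1$ with constant term $\tfrac{\pi}{2}\cdot\tfrac{d}{d+1}$ and nonpositive higher coefficients is known, and the paper offers no route past this obstruction either.
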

This conjecture has been independently  stated in  \cite{H}  for all $d\ge 1$ (Fejes T\'oth originally formulated it just for $\mathbb S^2$).  \\

We also formulate  a continuous version of the conjecture. Let $\mathcal{B}( \mathbb{S}^d)$ denote the set of Borel probability measures on the sphere, i.e. $\mu \ge 0$ and $\mu (\mathbb S^d) =1$, and let $F:[-1,1] \rightarrow \mathbb R$ be a (bounded or positive) measurable function. For $ \mu \in \mathcal{B}( \mathbb{S}^d)$ we define the energy integral  of $\mu$ with respect to the potential  $F$ as 
\begin{equation}\label{e.int}
 I_F( \mu) = \int_{ \mathbb{S}^d} \int_{ \mathbb{S}^d} F( x \cdot y) d \mu(x) d \mu(y).
\end{equation}
In this notation, the discrete energy of a set $Z = \{ z_1, \dots, z_N \}$  is 
\begin{equation}\label{e.en}
E_F(Z) =\frac{1}{N^2} \sum_{i, j = 1}^{N} F( z_i \cdot z_j) =  I_F \left( \frac1{N} \sum_{k=1}^{N} {\delta_{z_k} } \right) . 
\end{equation}
In our case, $F(t) = \arccos |t|$ and we  suppress the  dependence on $F$ in the subscript, as we did in \eqref{e.DE}, i.e. 
\begin{equation}\label{e.int1}
I (\mu ) = \int_{ \mathbb{S}^d} \int_{ \mathbb{S}^d} \arccos | x \cdot y| \, d \mu(x) d \mu(y)
\end{equation}
In accordance to Conjecture \ref{c1} it is natural to guess that $I (\mu)$ should be maximized by the measure whose mass is equally distributed between the elements  of the orthonormal basis:
\begin{equation}\label{e.ONB}
\nu_{ONB} = \frac{1}{d+1}  \sum_{i=1}^{d+1} e_i.
\end{equation}
\begin{conjecture}\label{c2}
The energy integral $I(\mu)$ is maximized by $\nu_{ONB}$:
\begin{equation}\label{e.c2}
\max_{\mu \in \mathcal B (\mathbb S^d ) } I(\mu) =  I \big( \nu_{ONB} \big) = \frac{\pi }{2  } \cdot \frac{ d}{d+1 }. 
\end{equation}
\end{conjecture}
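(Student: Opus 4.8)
The plan is to attack Conjecture \ref{c2} by the dual linear programming (Delsarte--Yudin) method, adapted to the fact that the conjectured maximizer $\nu_{ONB}$ is a \emph{discrete} measure. Write $\lambda=(d-1)/2$ and let $\{C_n^{\lambda}\}_{n\ge 0}$ be the Gegenbauer (ultraspherical) polynomials, normalized so that $\int_{\mathbb S^d}\int_{\mathbb S^d}C_n^{\lambda}(x\cdot y)\,d\sigma(x)\,d\sigma(y)=\delta_{n,0}$, where $\sigma$ is the uniform probability measure. By Schoenberg's theorem (the addition formula writes $C_n^{\lambda}(x\cdot y)$ as a nonnegative combination of products of spherical harmonics), every $\mu\in\mathcal B(\mathbb S^d)$ satisfies the positive-definiteness relations
\begin{equation}
\alpha_n(\mu):=\int_{\mathbb S^d}\int_{\mathbb S^d}C_n^{\lambda}(x\cdot y)\,d\mu(x)\,d\mu(y)\ge 0\quad(n\ge 1),\qquad \alpha_0(\mu)=1.
\end{equation}
Since $F(t)=\arccos|t|$ is even, bounded and continuous, it has a Gegenbauer expansion $F(t)=\sum_{n\ \mathrm{even}}f_n\,C_n^{\lambda}(t)$, so that $I(\mu)=\sum_n f_n\,\alpha_n(\mu)$ and $I(\sigma)=f_0$.

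First I would record the data of the candidate extremizer. The vectors $e_1,\dots,e_{d+1}$ have pairwise inner products only in $D=\{0,1\}$, with $F(1)=0$ and $F(0)=\pi/2$; summing over the $(d+1)$ diagonal and $(d+1)d$ off-diagonal pairs reproduces $I(\nu_{ONB})=\tfrac{\pi}{2}\cdot\tfrac{d}{d+1}$, as in \eqref{e.c2}. The relevant quantities are
\begin{equation}
\alpha_n(\nu_{ONB})=\frac{1}{d+1}\Big(C_n^{\lambda}(1)+d\,C_n^{\lambda}(0)\Big),
\end{equation}
and the degrees $n$ for which this vanishes single out exactly where the dual certificate is permitted to be active.

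The core step is to construct an auxiliary function $H(t)=\sum_n h_n\,C_n^{\lambda}(t)$ satisfying the Delsarte--Yudin conditions: (a) the pointwise majorization $H(t)\ge F(t)$ on $[-1,1]$; (b) the sign condition $h_n\le 0$ for all $n\ge 1$; and the two complementary-slackness requirements (c) $H(t)=F(t)$ for $t\in D=\{0,1\}$ and (d) $h_n\,\alpha_n(\nu_{ONB})=0$ for every $n\ge 1$. Granting such an $H$, the proof closes in three lines: for any $\mu$,
\begin{equation}
I(\mu)\le I_H(\mu)=h_0+\sum_{n\ge 1}h_n\,\alpha_n(\mu)\le h_0=I_H(\nu_{ONB})=I(\nu_{ONB}),
\end{equation}
where the first inequality integrates the pointwise bound (a) against the positive measure $\mu\times\mu$, the second uses (b) together with $\alpha_n(\mu)\ge 0$, and the final equalities use (d) and then (c). Thus everything reduces to exhibiting the majorant $H$ with the prescribed coefficient signs and contact set.

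The hard part is precisely this construction, and it is where I expect the genuine obstruction to lie. Near $t=0$ the potential behaves like $\tfrac{\pi}{2}-|t|$, a concave corner, so an even majorant touching at the peak is unproblematic (its vanishing derivative automatically respects $H\ge F$ on both sides). The difficulty is at $t=\pm1$: since $\arccos(t)\sim\sqrt{2(1-t)}$ as $t\to 1^-$, the potential descends to $F(1)=0$ with infinite slope, so no smooth (in particular no polynomial) $H$ with $H(1)=0$ can dominate $F$ in a neighborhood of $1$---the square-root singularity defeats any finite-order contact. Reconciling the mandatory tangency $H(1)=F(1)=0$ with global majorization and with the infinitely many sign constraints $h_n\le 0$ is the bottleneck, and it is what confines a fully sharp certificate to the case $d=1$. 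There the change of variables $t=\cos\phi$ converts the square-root singularity into a mild corner of the $\pi$-periodic triangle wave $\arccos|\cos\phi|$, the Gegenbauer machinery degenerates to an explicit Fourier series whose coefficients have controllable signs, and the two slackness points $\phi=0,\pi/2$ can be matched exactly. For $d\ge 2$ I would not expect a clean polynomial certificate to exist; the realistic target is to relax (c)--(d), keeping only $H\ge F$ and $h_n\le 0$ for $n\ge 1$ to obtain $I(\mu)\le h_0$, and then to optimize the free coefficients of a finite-degree $H$ so as to push $h_0$ as close as possible to $\tfrac{\pi}{2}\cdot\tfrac{d}{d+1}$---yielding improved upper bounds rather than the exact conjecture.
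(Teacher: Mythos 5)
The statement you are addressing is Conjecture \ref{c2}, which is open for every $d\ge 2$: the paper itself does not prove it, but only establishes the weaker bound of Theorem \ref{t.main} and settles the case $d=1$ in \S\ref{s.d1}. Your proposal, by your own honest admission, does not prove it either: the entire argument is conditional on producing a majorant $H$ with nonpositive Gegenbauer coefficients for $n\ge 1$ and the contact conditions (c)--(d), and that construction is never carried out. The obstruction you identify is real and is precisely the gap: since the atoms of $\nu_{ONB}$ give the diagonal $\{x=y\}$ positive $\nu_{ONB}\times\nu_{ONB}$ mass, tightness forces $H(1)=F(1)=0$, which is incompatible with $H\ge F$ near $t=1$ for any smooth (in particular polynomial) $H$, because $\arccos t\sim\sqrt{2(1-t)}$ as $t\to 1^-$. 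Note, however, that this only rules out \emph{smooth} certificates; functions with nonpositive coefficients need not be smooth (for $d=1$ the certificate is $F$ itself, which has corners), so your assertion that a sharp certificate can exist only when $d=1$ is a plausible heuristic, not a proof of impossibility. In short: what you have written is a program, and the missing step is the theorem.

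That said, your framework accurately reproduces everything the paper actually proves about this conjecture. Your fallback --- drop (c)--(d), keep $H\ge F$ and $h_n\le 0$ for $n\ge 1$, and minimize $h_0$ over low-degree polynomials --- is exactly the proof of Theorem \ref{t.main}: the quadratic majorant $H(t)=\frac{\pi}{2}-\frac{69}{50}t^2$ of \eqref{e.mainineq} has as its only nonconstant Gegenbauer component a negative multiple of the degree-two polynomial, and the resulting bound $I(\mu)\le h_0=\frac{\pi}{2}-\frac{69}{50(d+1)}$ is the combination of \eqref{e.mainineq} with the frame-energy bound of Lemma \ref{l.frame}. Likewise your $d=1$ remarks are the paper's \S\ref{s.che} and Fourier-series arguments: there $H=F$ is itself admissible, since by \eqref{e.cheb} all nonconstant Chebyshev coefficients are $\le 0$, and the complementary slackness at $\sigma_4$ (hence at $\nu_{ONB}$, by \eqref{e.all}) is the content of Lemma \ref{l.points}. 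So your survey of the landscape is correct and consistent with the paper; but as a proof of \eqref{e.c2} for $d\ge 2$ it has a genuine, and currently unfillable, gap.
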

Since the maximal value in \eqref{e.c1a} is independent of $N$, a simple argument based on the weak$^*$-density of the span of Dirac masses in $\mathcal B (\mathbb S^d)$ shows that   the case $N= m (d+1)$ of Conjecture \ref{c1} implies Conjecture \ref{c2}, and the converse implication is obvious.  

The case $d=1$ of Conjecture \ref{c1} has been settled in \cite{B,H}. We shall return to this case in \S \ref{s.d1} and shall give several alternative proofs. 

In dimension $d=2$, Fejes T\'oth confirmed Conjecture \ref{c1} for $N \le 6$ and established an asymptotic upper bound  $E(Z) \le \frac{2\pi}{5}$ for large $N$. In \cite{B} Fodor, V\'igh, and Zarn\'ocz proved that for any point distribution $Z \subset \mathbb S^2$ 
\begin{equation}\label{e.fvz}
 E  (Z) \le \frac{3\pi}8 \,\,\, \textup{ when $N$ is even, }
 \end{equation} with a small correction for $N$ odd.  
 
In the present paper, we prove a new upper bound for $I(\mu)$ in all dimensions $d\ge 2$, which  is stronger than \eqref{e.fvz} when restricted to $\mathbb S^2$. 
\begin{theorem}\label{t.main}
In all dimensions $d\ge 2$
\begin{equation}\label{e.main}
 \max_{ \mu \in \mathcal{B} (\mathbb S^d)  } I ( \mu) \leq  \frac{\pi}{2} - \frac{69}{50 (d+1)}.
 \end{equation}
 In particular, for $d=2$,
 \begin{equation}\label{e.maind2}
 \max_{ \mu \in \mathcal{B} (\mathbb S^2)  } I ( \mu) \leq  \frac{\pi}{2} - \frac{69}{150 } = 1.110796... \,  <  \frac{3\pi}{8} = 1.178097...\,  .
 \end{equation}
\end{theorem}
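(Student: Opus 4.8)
The plan is to bound the energy integral $I(\mu)$ by passing to a suitable "linearized" or "lower-bound" potential for which the maximization becomes tractable via a positive-definiteness argument. Concretely, I would seek a function $G:[-1,1]\to\mathbb R$ with the two properties that (i) $\arccos|t|\le G(t)$ pointwise on $[-1,1]$, so that $I(\mu)\le I_G(\mu)$ for every probability measure $\mu$, and (ii) $G$ is chosen so that $I_G(\mu)$ can be maximized in closed form. The natural candidate for making step (ii) work is to take $G$ of the form $G(t)=a-b\,t^2+(\text{conditionally positive-definite correction})$, because the quadratic term $t^2=(x\cdot y)^2$ is the degree-two Gegenbauer/zonal term whose energy $\int\int (x\cdot y)^2\,d\mu\,d\mu$ is controlled: by the addition formula, $\int\int (x\cdot y)^2 d\mu(x)d\mu(y)=\|\widehat{\mu}\|^2\ge \frac1{d+1}$, with equality exactly for measures whose second moment (inertia) tensor is isotropic, such as $\nu_{ONB}$.

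The key steps, in order, would be as follows. First, I would construct the majorant $G$ explicitly. Since $\arccos|t|$ has a singularity in its derivative at $t=0$ and equals $\pi/2$ there, while $\arccos|t|=0$ at $t=\pm1$, a pure quadratic $G(t)=\frac\pi2-c\,t^2$ cannot majorize it everywhere (the quadratic dips too low near $t=\pm1$ unless $c$ is small). I would therefore optimize the constant $c$, or more likely use a quadratic majorant on a subinterval combined with the crude bound $\arccos|t|\le\pi/2$ elsewhere, choosing the breakpoint and $c$ to get the numerically sharp constant $69/50$. Second, I would compute $I_G(\mu)$: writing $G(t)=A-B\,t^2$ (plus negligible lower-order terms), one gets
\begin{equation}
I_G(\mu)=A-B\int_{\mathbb S^d}\int_{\mathbb S^d}(x\cdot y)^2\,d\mu(x)\,d\mu(y)\le A-\frac{B}{d+1},
\end{equation}
using $B\ge0$ together with the moment inequality $\int\int(x\cdot y)^2\,d\mu\,d\mu\ge\frac1{d+1}$. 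Third, I would combine these to read off the bound $\frac\pi2-\frac{69}{50(d+1)}$ after plugging in the optimized constants, and verify the $d=2$ numerics in \eqref{e.maind2}.

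The moment inequality in the second step is standard and deserves a one-line justification: expanding $(x\cdot y)^2=\sum_{k,\ell}x_kx_\ell\,y_ky_\ell$ and setting $M=\int_{\mathbb S^d}x x^{\mathsf T}\,d\mu(x)$, one has $\int\int(x\cdot y)^2\,d\mu\,d\mu=\operatorname{tr}(M^2)=\sum_i\lambda_i^2$ where $\lambda_i\ge0$ are the eigenvalues of $M$; since $\operatorname{tr}M=\sum_i\lambda_i=1$ and $M$ is $(d+1)\times(d+1)$, Cauchy--Schwarz gives $\sum_i\lambda_i^2\ge\frac1{d+1}$, with equality iff $M=\frac1{d+1}I$, which $\nu_{ONB}$ achieves.

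I expect the main obstacle to be the construction and verification of the majorant $G$ in the first step — specifically, proving the pointwise inequality $\arccos|t|\le G(t)$ on all of $[-1,1]$ while keeping the leading constant as large as $69/50$. Because $\arccos|t|$ is even, concave on $(0,1)$, and non-smooth at $t=0$, a single global quadratic majorant is too lossy, so the genuine work is to optimize the piecewise/quadratic majorant (choosing the cutoff location and the coefficient $c$) and to certify the resulting one-variable inequality, which is presumably where the curious rational constant $\tfrac{69}{50}$ and the need for $d\ge2$ (so that $\tfrac1{d+1}\le\tfrac13$, keeping the quadratic term effective) both originate. The positive-definiteness/moment step, by contrast, is routine linear algebra.
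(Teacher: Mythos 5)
Your overall strategy --- majorize $\arccos|t|$ pointwise by a quadratic $\frac{\pi}{2}-c\,t^2$ and then invoke the moment inequality $I_{t^2}(\mu)\ge\frac{1}{d+1}$ --- is exactly the paper's proof, and your linear-algebra justification of that inequality (writing it as $\operatorname{tr}(M^2)=\sum_i\lambda_i^2\ge\frac{1}{d+1}$ by Cauchy--Schwarz) is a correct equivalent of the paper's Lemma \ref{l.frame}. However, there is a genuine error in the step you yourself single out as the main obstacle. You assert that a pure quadratic $G(t)=\frac{\pi}{2}-c\,t^2$ ``cannot majorize $\arccos|t|$ everywhere'' because it ``dips too low near $t=\pm1$ unless $c$ is small.'' This is false: at $t=\pm 1$ the requirement is only $\frac{\pi}{2}-c\ge 0$, i.e.\ $c\le\frac{\pi}{2}\approx 1.57$, which $c=\frac{69}{50}=1.38$ clears comfortably. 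The binding constraint is interior, not at the endpoints: with $f(t)=\frac{\pi}{2}-bt^2-\arccos t$ on $[0,1]$, one has $f(0)=0$, then $f$ increases, decreases to a local minimum at the critical point
\begin{equation}
t_* \;=\; \sqrt{\frac{b+\sqrt{b^2-1}}{2b}} \;\approx\; 0.92 \quad (b=\tfrac{69}{50}),
\end{equation}
and finally increases to $f(1)=\frac{\pi}{2}-b>0$; the majorization holds if and only if $f(t_*)\ge 0$, which for $b=\frac{69}{50}$ holds with margin roughly $10^{-3}$. This single-variable verification is precisely the paper's argument, and it is the entire content of the step you left open; no piecewise construction is needed.

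Moreover, your preferred fallback --- a quadratic majorant on a subinterval glued to the crude bound $\frac{\pi}{2}$ near $t=0$ --- cannot deliver the theorem as stated. Such a $G$ is no longer of the form $A-Bt^2$, so the moment inequality does not apply to it directly; to use it you must dominate $G$ by an honest quadratic, and any $A-Bt^2\ge G$ with $G\equiv\frac{\pi}{2}$ on $[0,t_0]$ forces $A\ge\frac{\pi}{2}+Bt_0^2$. The resulting bound $A-\frac{B}{d+1}=\frac{\pi}{2}+Bt_0^2-\frac{B}{d+1}$ carries an additive penalty $Bt_0^2$ independent of $d$, so it cannot take the form $\frac{\pi}{2}-\frac{69}{50(d+1)}$ for all $d\ge 2$ (let $d\to\infty$); even for fixed $d=2$, the trade-off between the enlarged admissible $B$ and the penalty $Bt_0^2$ is optimized at $t_0=0$, i.e.\ back at the pure quadratic. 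A minor additional point: the restriction $d\ge 2$ in the theorem is not forced by the method (the bound holds for $d=1$ as well); it is stated because the case $d=1$ is already solved exactly with the better constant $\frac{\pi}{4}$.
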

We recall that, according to \eqref{e.c1a} and   \eqref{e.c2},  the conjectured maximal value in dimension    $d=2$ is $\frac{\pi}3 = 1.047198...\,$.  The proof of Theorem \ref{t.main} is based on bounding the potential $F(t) = \arccos |t|$ above by a quadratic polynomial and employing the estimates for the so-called ``frame potential".  It is presented in \S \ref{s.proof}. 

It is worth noting  that the energy optimization problem considered here has a distinctly different flavor compared to many standard problems of this kind, see e.g. \cite{BHSbook}.   In the vast majority of energy minimization problems, the potential $F(t)$ is maximized at $t=1$ and minimized at $t=-1$. 
Such is the case, for example, 
for the classical Riesz potential $F(x\cdot y) = \| x - y \|^{-s}$.  If one loosely interprets points as ``electrons", and measures as charge distributions, this results in the strongest repulsion when the electrons are close to each other, and the weakest when they are far away. In this situation one often expects $I_F (\mu)$ to be minimized by $\sigma$, the normalized Lebesgue surface measure on $\mathbb S^d$ -- in other words, minimizing energy induces uniform distribution. 

In our setting,  however, the smallest value of $-F(t)$ is at $t=0$, hence the weakest   repulsion occurs when two electrons are positioned orthogonally to each other (notice that we are maximizing, not minimizing the energy). Potentials with such behavior arise in various problems, e.g. $F(t) = t^2$ is known as the ``frame potential" (see \cite{F} and Lemma \ref{l.frame} below) and  $F(t)= |t|^p$ as $p$-frame potential  \cite{G}.  Possible optimizers for such energies naturally include orthonormal bases, other structures which exhibit similar orthogonality properties (e.g., frames), or the uniform distribution $\sigma$. 

The case of $F(t) = \arccos (t)$, rather than $F(t) = \arccos |t|$ (i.e. the sum of actual angles, rather than the acute line angles),  has been studied in \cite{C}: in this situation {\it{every}} symmetric measure is a maximizer, with a certain correction in the discrete case for odd $N$.

In the present paper, in addition to proving Theorem \ref{t.main}, we explore some other issues related to this problem. In \S \ref{s.dim},  we observe that the validity of Conjecture \ref{c2} in some dimension $d \ge 2$ implies its validity in all lower dimensions, and in \S \ref{s.d1} we revisit the one-dimensional case of Conjectures \ref{c1} and \ref{c2} (proved, e.g., in \cite{A}, \cite{B}) and provide several alternative proofs: two based on orthogonal expansions, and one based on the Stolarsky principle in discrepancy theory. We hope that these approaches may shed some light on the conjectures in the future. 

\section{The frame potential and the proof of Theorem \ref{t.main}}\label{s.proof}

Since our approach is based on bounding the potential $F(t) = \arccos |t|$ by a quadratic function, we first study the behavior of the energy with the potential $t^2$. We have the following lemma.

\begin{lemma}\label{l.frame}
For any $\mu \in \mathcal B (\mathbb S^d)$, we have 
\begin{equation}\label{e.frame}
I_{t^2} (\mu) \ge \frac{1}{d+1}.
\end{equation}
The equality above is achieved precisely for the measures $\mu$ 
whose second moment matrix is a multiple of the identity, i.e. $\displaystyle{\left[ \int_{\mathbb S^d} x_i x_j \, d\mu (x)   \right]_{i,j =1}^{d+1}  = c\,  {\bf{I}}_{d+1}}$. 
\end{lemma}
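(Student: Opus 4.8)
The plan is to expand the frame potential $I_{t^2}(\mu)$ in coordinates and reduce the inequality to the statement that the trace of a certain positive semidefinite matrix is at least $\frac{1}{d+1}$. Writing $x = (x_1, \dots, x_{d+1})$ and $y = (y_1, \dots, y_{d+1})$ for points on $\mathbb{S}^d \subset \mathbb{R}^{d+1}$, I would first observe that
\begin{equation}
(x \cdot y)^2 = \Big( \sum_{i=1}^{d+1} x_i y_i \Big)^2 = \sum_{i,j=1}^{d+1} x_i x_j \, y_i y_j.
\end{equation}
Substituting this into the double integral \eqref{e.int} with $F(t) = t^2$ and using Fubini to separate the $x$ and $y$ integrations, I get
\begin{equation}
I_{t^2}(\mu) = \sum_{i,j=1}^{d+1} \left( \int_{\mathbb{S}^d} x_i x_j \, d\mu(x) \right) \left( \int_{\mathbb{S}^d} y_i y_j \, d\mu(y) \right) = \sum_{i,j=1}^{d+1} M_{ij}^2 = \| M \|_{HS}^2,
\end{equation}
where $M = \big[ \int_{\mathbb{S}^d} x_i x_j \, d\mu(x) \big]_{i,j=1}^{d+1}$ is the (symmetric, positive semidefinite) second moment matrix and $\|\cdot\|_{HS}$ denotes the Hilbert–Schmidt (Frobenius) norm.

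**The next step** is to exploit the single normalization constraint available. Since every $x \in \mathbb{S}^d$ satisfies $\sum_{i=1}^{d+1} x_i^2 = 1$, integrating against the probability measure $\mu$ gives the trace identity
\begin{equation}
\operatorname{tr}(M) = \sum_{i=1}^{d+1} \int_{\mathbb{S}^d} x_i^2 \, d\mu(x) = \int_{\mathbb{S}^d} 1 \, d\mu(x) = 1.
\end{equation}
Thus the problem reduces to a purely linear-algebraic fact: among all real symmetric positive semidefinite matrices $M$ of size $(d+1) \times (d+1)$ with $\operatorname{tr}(M) = 1$, minimize $\|M\|_{HS}^2 = \sum_k \lambda_k^2$, where $\lambda_1, \dots, \lambda_{d+1} \geq 0$ are the eigenvalues. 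This is immediate from Cauchy–Schwarz (or the power-mean inequality) applied to the eigenvalues: with $\sum_k \lambda_k = 1$ one has $1 = \big( \sum_k \lambda_k \big)^2 \leq (d+1) \sum_k \lambda_k^2$, giving $\|M\|_{HS}^2 \geq \frac{1}{d+1}$, which is exactly \eqref{e.frame}.

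**For the equality characterization**, I would track when the Cauchy–Schwarz step is tight. Equality in $\big( \sum_k \lambda_k \big)^2 \leq (d+1) \sum_k \lambda_k^2$ holds precisely when all eigenvalues are equal, i.e. $\lambda_1 = \cdots = \lambda_{d+1}$; combined with $\operatorname{tr}(M) = 1$ this forces each $\lambda_k = \frac{1}{d+1}$, so $M = \frac{1}{d+1} {\bf I}_{d+1}$. Conversely, if $M = c\, {\bf I}_{d+1}$ then the trace constraint pins $c = \frac{1}{d+1}$ and equality clearly holds. This matches the claimed condition that the second moment matrix be a multiple of the identity. I do not anticipate a genuine obstacle here: the only points requiring care are the justification of Fubini (harmless, since the integrand is bounded and $\mu$ is a finite measure) and clearly stating that the trace normalization is what converts the eigenvalue inequality into the sharp constant $\frac{1}{d+1}$.
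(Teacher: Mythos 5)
Your proof is correct, and its first half coincides exactly with the paper's: both expand $(x\cdot y)^2$ in coordinates, use Fubini to separate the two integrations, and identify $I_{t^2}(\mu)$ as the squared Hilbert--Schmidt norm of the second moment matrix $M = \big[ \int_{\mathbb S^d} x_i x_j \, d\mu(x) \big]_{i,j=1}^{d+1}$, with $\operatorname{tr}(M)=1$ coming from $\|x\|=1$ and $\mu(\mathbb S^d)=1$. Where you part ways is the finishing step. The paper stays entrywise: it discards the off-diagonal squares $\big(\int x_i x_j\,d\mu\big)^2$, $i\neq j$, and applies Cauchy--Schwarz to the $d+1$ diagonal entries; equality then forces the off-diagonal moments to vanish \emph{and} the diagonal moments to be equal, which together give $M = c\,{\bf I}_{d+1}$. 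You instead invoke the spectral theorem and apply Cauchy--Schwarz to the eigenvalues, proving the basis-independent inequality $\|M\|_{HS}^2 \ge \operatorname{tr}(M)^2/(d+1)$; equality forces all eigenvalues to coincide, which for a symmetric matrix again means $M = c\,{\bf I}_{d+1}$. The paper's route is slightly more elementary (no diagonalization is needed, and note that positive semidefiniteness of $M$ plays no role in your argument either, since Cauchy--Schwarz on the eigenvalues holds for arbitrary reals), while yours isolates a clean, invariant linear-algebra statement --- among symmetric matrices of fixed trace, the Frobenius norm is minimized precisely at multiples of the identity --- and delivers the equality characterization in a single step rather than as the conjunction of two separate equality conditions.
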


\begin{proof} We expand the square, throw away off-diagonal terms, and apply the Cauchy--Schwartz inequality:
\begin{align*}
I_{t^2} (\mu) & = \int\limits_{\mathbb S^d} \int\limits_{\mathbb S^d} ( x\cdot y )^2 \,\, d\mu (x) d\mu (y)  = \sum_{i,j = 1}^{d+1} \left( \,\,\, \int\limits_{\mathbb S^d} x_i x_j \, d\mu (x) \right)^2 \\
& \ge \sum_{i=1}^{d+1}  \left( \,\,\, \int\limits_{\mathbb S^d} x_i^2 \, d\mu (x)  \right)^2  \ge \frac{1}{d+1} \left(  \sum_{i=1}^{d+1}  \, \int\limits_{\mathbb S^d} x_i^2 \, d\mu (x)  \right)^2  = \frac1{d+1} .
\end{align*}
The characterization of minimizers follows immediately from the  inequalities above. 
\end{proof}

The energy with the potential $F(t) = t^2$ arises naturally in functional analysis and signaI processing. In \cite{F}, Benedetto and Fickus defined the  discrete ``frame potential", which up to normalization is equal to $E_{t^2} (Z)$, and proved that for $N \ge d+1$ 
\begin{equation}\label{e.BF}
E_{t^2} (Z) \ge \frac{1}{d+1}
\end{equation}
with equality achieved if and only if the set $Z = \{ z_1, \dots, z_N \}  \subset \mathbb S^d$ forms a so-called  tight frame, i.e. for each $x \in \mathbb R^{d+1}$ one has an analogue of Parseval's identity: 
$\displaystyle{c\|x \|^2 = \sum_{k=1}^N | x\cdot z_k|^2}$. It is easy to see that  Lemma \ref{l.frame} implies this discrete result (although much more is proved in \cite{F}, in particular, that every local minimizer is necessarily global). Conversely, since the minimum in \eqref{e.BF} is independent of $N$, the  weak$^*$-density of probability measures shows that the result of  Benedetto and Fickus also implies \eqref{e.frame}. Moreover, inequality \eqref{e.frame} for arbitrary  measures $\mu  \in \mathcal B (\mathbb S^d)$ has been stated in \cite{G}. We have included the nice and short proof above for the sake of completeness. \\



To prove Theorem \ref{t.main}, it suffices to demonstrate that the inequality 
\begin{equation}\label{e.mainineq}
\arccos|t| \leq \frac{\pi}{2} - \frac{69}{50} t^2
\end{equation} holds for all $t\in[-1,1]$, as then \eqref{e.frame} would imply that, for all $\mu \in \mathcal{B}( \mathbb{S}^d)$,
\begin{equation}
I( \mu) \leq \frac{\pi}{2} - \frac{69}{50}I_{t^2} (\mu) \leq \frac{\pi}{2} - \frac{69}{50(d+1)}.
\end{equation}

\noindent A simple calculus exercise shows $\arccos |t|  \leq \frac{\pi}{2} - bt^2$  for all $t\in[-1,1]$ if and only if
$$ \frac{\pi}{2} - \frac{b + \sqrt{b^2 - 1}}{2} - \arccos \left( \sqrt{ \frac{b + \sqrt{b^2 - 1}}{2b}} \right) \geq 0,$$
and a quick computation shows that for $b \leq \frac{69}{50}$  the  inequality above indeed holds, finishing the proof of   Theorem \ref{t.main}. In Figure \ref{f.1} we include the graph,  illustrating inequality \eqref{e.mainineq}. $\square$ \\

\begin{figure}
  \centering
    \begin{minipage}{0.45\textwidth}
        \centering
        \includegraphics[scale=0.6]{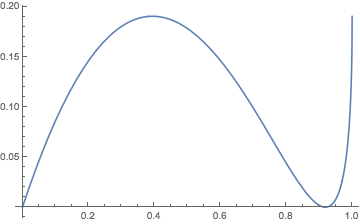}  
    \end{minipage}\hfill
    \begin{minipage}{0.45\textwidth}
        \centering
        \includegraphics[scale=0.6]{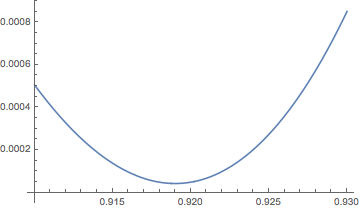} 
    \end{minipage}
    \caption{An illustration to inequality \eqref{e.mainineq}: the graph of the function 
    $\displaystyle{f (t) = \frac{\pi}2 - \frac{69}{50} \, t^2 - \arccos |t| }$ for $0\le t \le 1$.}
    \label{f.1}
\end{figure}

It is easy to see that there is very little  room for improvement via this method, and it will definitely not yield the Conjecture \ref{c2}. 

\section{Dimension reduction argument}\label{s.dim}


Let $F: [-1, 1] \rightarrow \mathbb{R}$ be a bounded, measurable function that achieves its maximum at $0$. Let $\nu \in \mathcal{B}( \mathbb{S}^k)$ and $\lambda \in \mathcal{B}( \mathbb{S}^l)$, with $k + l = d-1$. Construct a measure $\mu \in  \mathcal{B}( \mathbb{S}^d)$ as follows: 
\begin{equation}\label{e.musum}
 \mu = \alpha \widetilde{\nu} + \beta \widetilde{\lambda},
 \end{equation} where $0\le \alpha, \beta \le 1$, $\alpha + \beta =1 $, and  $\widetilde{\nu}$ and $  \widetilde{\lambda}$ are  copies of $\nu$ and $\lambda$, supported on mutually orthogonal subspheres of $\mathbb S^d$ of dimensions $k$ and $l$, respectively. It is easy to see that 
\begin{equation}\label{e.dimr}
I_F (\mu ) = \alpha^2 I_F (\nu) + \beta^2 I_F (\lambda)  + 2 \alpha \beta F(0),
\end{equation}
which easily implies that if $\mu$ is a maximizer of $I_F$ over $ \mathcal{B}( \mathbb{S}^d)$, then $\nu$ and $\lambda$ maximize $I_F$ respectively over  $\mathcal{B}( \mathbb{S}^k)$ and $ \mathcal{B}( \mathbb{S}^l)$.

The orthonormal basis measure $\nu_{ONB}$ is precisely of the form \eqref{e.musum} with $\nu$ and $\lambda$ also being orthonormal basis measures in lower dimensions. Therefore, in particular, we have just proved that the validity of Conjecture \ref{c2}  on $\mathbb S^d$ for some dimension $d\ge 2$ implies its validity on $\mathbb S^k$ for $k <d$, i.e. in all lower dimensions. 

Moreover, in our case, for $F(t) = \arccos |t|$, we have $F(0) = \frac{\pi}{2}$. Let $l=0$ and $k= d-1$. Then $I  (\lambda ) = F (1) = 0$, and thus  \eqref{e.dimr} becomes $$ I (\mu ) = \alpha^2 I  (\nu)     + \pi \alpha (1-\alpha) . $$
Optimizing this quadratic polynomial in $\alpha$ we find that for $\displaystyle{\alpha = \frac{\pi}{2 \big(\pi - I_F (\nu) \big)}}$
$$\displaystyle{I(\nu ) = \pi - \frac{\pi^2}{4 I(\mu) }}.$$
This discussion leads to the following conclusion.
\begin{proposition}[Dimension reduction] $\,\,$ 
\begin{enumerate}
\item Denote $M_d = \max \{ I (\mu):\, \mu \in \mathcal B (\mathbb S^d ) \}$. Then $$ M_{d-1} \le \pi - \frac{\pi^2}{4 M_d}.$$
\item Assume that Conjecture \ref{c2} holds on $\mathbb S^d$, i.e. $M_d = \frac{\pi d}{2(d+1)}$. Then it also holds on $\mathbb S^{d-1}$, and consequently in all lower dimensions. 
\end{enumerate}
\end{proposition}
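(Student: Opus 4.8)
The plan is to build directly on the computation carried out in the paragraph preceding the statement: identity \eqref{e.dimr}, specialized to $l=0$, already does the analytic work, so the proof is mostly a matter of selecting $\nu$ optimally and tracking the inequalities. For part (1), I would first note that a maximizer on $\mathbb S^{d-1}$ exists, since $F(t)=\arccos|t|$ is continuous on $[-1,1]$, hence $I$ is weak-$*$ continuous on the weak-$*$ compact set $\mathcal B(\mathbb S^{d-1})$; call it $\nu$, so that $I(\nu)=M_{d-1}$. (One may instead run the argument with a sequence of approximate maximizers and let the defect tend to $0$, bypassing attainment altogether.) Feeding this $\nu$ into \eqref{e.musum} with $k=d-1$ and $\widetilde\lambda$ a single point mass, so that $I(\lambda)=F(1)=0$, reduces \eqref{e.dimr} to the quadratic $I(\mu)=\alpha^2 I(\nu)+\pi\alpha(1-\alpha)$ studied above.

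Before optimizing I would verify that the critical weight $\alpha^\ast=\tfrac{\pi}{2(\pi-I(\nu))}$ is admissible. The pointwise bound $\arccos|t|\le\tfrac\pi2$ forces $0\le I(\nu)\le\tfrac\pi2$, whence $\pi-I(\nu)\in[\tfrac\pi2,\pi]$ and therefore $\alpha^\ast\in[\tfrac12,1]\subset[0,1]$; in particular the quadratic attains its maximum at a feasible point, equal to $\tfrac{\pi^2}{4(\pi-I(\nu))}$. Since the resulting $\mu$ is an admissible competitor on $\mathbb S^d$, we have $\tfrac{\pi^2}{4(\pi-M_{d-1})}=I(\mu)\le M_d$; as $\pi-M_{d-1}>0$ and $M_d>0$, clearing denominators gives $\pi^2\le 4M_d(\pi-M_{d-1})$, that is $M_{d-1}\le \pi-\tfrac{\pi^2}{4M_d}$, which is part (1).

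For part (2) I would substitute the hypothesis $M_d=\tfrac{\pi d}{2(d+1)}$ into the bound just obtained and simplify, which yields $M_{d-1}\le\tfrac{\pi(d-1)}{2d}$. This upper bound is matched from below by the orthonormal basis measure: by \eqref{e.c2} applied in dimension $d-1$ we have $I(\nu_{ONB})=\tfrac\pi2\cdot\tfrac{d-1}{d}=\tfrac{\pi(d-1)}{2d}$, so $M_{d-1}\ge I(\nu_{ONB})=\tfrac{\pi(d-1)}{2d}$. The two inequalities force equality, hence $M_{d-1}=I(\nu_{ONB})$ and Conjecture \ref{c2} holds on $\mathbb S^{d-1}$; a downward induction, reapplying part (2) with $d-1,d-2,\dots$ in place of $d$, then propagates the conjecture to all lower dimensions.

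I do not expect a genuine obstacle here: the whole argument is a one-parameter optimization glued to the trivial lower bound furnished by $\nu_{ONB}$. The only points demanding care are the existence (or approximation) of the maximizer $\nu$, the verification that $\alpha^\ast$ lands in $[0,1]$ — which is precisely where the hypothesis that $F$ attains its maximum at $0$, giving $I(\nu)\le F(0)=\tfrac\pi2$, is used — and keeping the orientation of the inequality correct when comparing the constructed competitor $\mu$ against $M_d$.
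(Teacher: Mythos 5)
Your proposal is correct and takes essentially the same route as the paper: it specializes \eqref{e.musum} and \eqref{e.dimr} to $l=0$, optimizes the quadratic in $\alpha$ to get $I(\mu)=\frac{\pi^2}{4(\pi-I(\nu))}$, compares with $M_d$ for part (1), and derives part (2) from part (1) together with the lower bound $M_{d-1}\ge I(\nu_{ONB})=\frac{\pi(d-1)}{2d}$, which is one of the two derivations the paper itself indicates. The extra care you take (existence of a maximizer via weak-$*$ compactness, and checking $\alpha^\ast\in[\tfrac12,1]$ using $I(\nu)\le\tfrac\pi2$) only fills in details the paper leaves implicit.
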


Observe, that part (2) follows both from part (1), and, in a more general setting,  from the discussion in the beginning of this section. This  Proposition implies that, in order to prove Conjecture \ref{c2} in all dimensions $d\ge 2$, it   is enough to establish its validity for infinitely many values of $d$.

\section{The case of $\mathbb S^1$ revisited}\label{s.d1}

We now revisit the case  $d=1$, in which Conjecture \ref{c2} (and hence also Conjecture \ref{c1}) has been settled. We shall provide several new approaches to this case  (two based on orthogonal expansions, and one based on connections to discrepancy theory), which  we hope may lead to future progress in higher dimensions. 

Before we proceed, we observe  that on $\mathbb S^1$ 
\begin{equation}\label{e.all}
I ( \nu_{ONB} ) = I (\sigma) = I (\sigma_{4N} ) = \frac{\pi}{4},
\end{equation}
where $\sigma$ is the normalized uniform measure $d\sigma = \frac{d\theta}{2\pi}$, and $\displaystyle{\sigma_{4N } = \frac{1}{4N} \sum_{k=1}^{4N} \delta_{e^{\pi i k/2N}}}$ is the measure with mass equally concentrated at $4N$ equally spaced points. Hence, to prove Conjecture \ref{c2} it suffices to prove that any of these measures is a maximizer.  

\subsection{Chebyshev polynomial expansion. }\label{s.che} It is well known (see, e.g., \cite[Proposition 2.1]{D}) that  $\sigma$ is a maximizer of the energy integral  $I_F$ on $\mathbb S^1$  if and only if $F$ is a negative definite function on $\mathbb S^1$ (up to the constant term), which is equivalent to
the fact that  in the orthogonal expansion of $F$ into Chebyshev polynomials,  $\displaystyle{F(t) \sim \sum_{n=0}^\infty \widetilde{F}_n T_n  (t)}$, the coefficients of all non-constant terms are non-positive, i.e. $$ \widetilde{F}_n \le 0 \,\,\, \textup{ for } \,\,\, n\ge 1. $$
Here $T_n$ is the $n^{th}$ Chebyshev polynomial of the first kind,  that is
\begin{equation}\label{e.chepoly}
 T_n(t) = \cos( n \arccos(t)).
 \end{equation}
A straightforward, but technical   computation  (which we omit) shows that for $F(t) = \arccos|t|$,
\begin{equation}\label{e.cheb}
 \widetilde{F}_n  = \frac{1}{\pi} \int_{-1}^{1} F(t) T_n(t) (1-t^2)^{- \frac{1}{2}} dt  = \begin{cases}
\frac{\pi}{4}, & \text{ if } n = 0,\\
\frac{-4}{ \pi n^2}, & \text{ if } n \equiv 2 \text{ mod } 4, \\
0, & \text{otherwise}.
\end{cases}
\end{equation}
Therefore, $\sigma$ maximizes $I_F$ on $\mathbb S^1$, which together with \eqref{e.all} implies Conjecture \ref{c2}. 

\subsection{Fourier series}
Our second orthogonal expansion is a Fourier series. Every point in $\mathbb{S}^1$ can be defined by its angle, so with a slight abuse of notation we can rewrite our energy integral as
$$ I_F(\mu) = \int_{ \mathbb{T}} \int_{\mathbb{T}} F( \cos( \theta - \phi)) d \mu(\theta) d\mu( \phi) = \int_{ \mathbb{T}} \int_{\mathbb{T}} G (  \theta - \phi) d \mu(\theta) d\mu( \phi) .$$
Let  $ G( \theta) = F( \cos( \theta)) $ 
 be an even function.  Then for $\nu \in \mathcal B (\mathbb S^d)$  defined by $d \nu(\theta) = \frac{d\mu(\theta) + d\mu( -\theta)}{2}$ for all $\theta \in \mathbb{T}$, we have $I_F(\mu) = I_F( \nu)$. Thus, for the rest of this section, we may assume that $\mu$ is an even measure. As long as  the Fourier series of $G$, which is a cosine series $ \displaystyle{G( \theta ) \sim \sum_{n=0}^\infty \widehat{G} (n) \cos n\theta }$,  converges absolutely,  we can use it to compute the energy:
\begin{equation}\label{e.enfourier}
\begin{aligned}
I_F(\mu) & = \sum_{n=0}^{ \infty} \widehat{G}(n) \int_{ \mathbb{T}} \int_{ \mathbb{T}}  \cos( n( \theta - \phi)) \;  d \mu(\theta) d \mu( \phi) \\
& = \sum_{n=0}^{ \infty} \widehat{G}(n) \int_{ \mathbb{T}} \int_{ \mathbb{T}}  \bigg( \cos( n \theta) \cos(n \phi) + \sin( n \theta) \sin(n \phi)  \bigg) \;  d \mu(\theta) d \mu( \phi). \\
\end{aligned}
\end{equation}
Since $\mu$ is an even measure, the sines do not contribute to the integral, and we have
\begin{equation}\label{e.enfourier1}
I_F(\mu) 
 = \sum_{n=0}^{ \infty} \widehat{G}(n) \left( \, \int_{ \mathbb{T}}  \cos( n \theta)   d \mu(\theta) \right)^2 \le  \widehat{G}(0) + \sum_{n\ge 1 : \,\, \widehat{G}(n) \ge 0 } \widehat{G}(n) 
\end{equation}
The equality above is achieved if and only if $\displaystyle{ \int_{ \mathbb{T}}  \cos( n \theta)   d \mu(\theta) = 0} $ for each value of $n\ge 1$ for which $\widehat{G}(n) < 0$ {and} $\displaystyle{ \int_{ \mathbb{T}}  \cos( n \theta)   d \mu(\theta) = 1} $ for each value of $n\ge 1$ such that $\widehat{G}(n) >0$.  

\begin{lemma}\label{l.points} Let $G$ be even and have absolutely convergent Fourier (cosine) series and let $\sigma_N$ be the probability measure generated by point masses at $N$ equally spaced points $$ \sigma_N = \frac1{N} \sum_{k=0}^{N-1} \delta_{2\pi k/N} . $$ 
Assume that for all $n\ge 1$ we have $\widehat{G} (n) \ge 0$  if  $n$ is a multiple of $N$, and $\widehat{G} (n) \le 0$ otherwise.  Then the measure $\sigma_N$ maximizes $I_F$ over all probability measures on $\mathbb T$.
\end{lemma}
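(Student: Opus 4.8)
The plan is to apply the general computation from equations \eqref{e.enfourier1} directly, identifying the measure $\sigma_N$ as a natural candidate for achieving equality. The key observation is that the upper bound in \eqref{e.enfourier1} is attained exactly when the Fourier coefficients $\int_{\mathbb T} \cos(n\theta)\, d\mu(\theta)$ take the extremal values dictated by the sign of $\widehat{G}(n)$: they should vanish wherever $\widehat{G}(n) < 0$ and equal $1$ wherever $\widehat{G}(n) > 0$. So the heart of the matter is to compute these Fourier coefficients for the discrete measure $\sigma_N$ and verify that they land precisely at these extremal values under the stated sign hypothesis on $\widehat{G}$.

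First I would compute, for $\sigma_N = \frac1N \sum_{k=0}^{N-1} \delta_{2\pi k/N}$, the quantity $\int_{\mathbb T} \cos(n\theta)\, d\sigma_N(\theta) = \frac1N \sum_{k=0}^{N-1} \cos(2\pi nk/N)$. This is a standard root-of-unity sum: writing $\cos(2\pi nk/N)$ as the real part of $e^{2\pi i nk/N}$, the geometric sum $\sum_{k=0}^{N-1} e^{2\pi i nk/N}$ equals $N$ when $N \mid n$ and equals $0$ otherwise. Hence $\int_{\mathbb T} \cos(n\theta)\, d\sigma_N(\theta) = 1$ if $N \mid n$ and $= 0$ if $N \nmid n$. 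This is exactly the dichotomy we need: the hypothesis says $\widehat{G}(n) \ge 0$ precisely when $n$ is a multiple of $N$ (for $n \ge 1$) and $\widehat{G}(n) \le 0$ otherwise.

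Next I would match this against the equality conditions. For $n \ge 1$ with $\widehat{G}(n) < 0$, the hypothesis forces $n$ to be a non-multiple of $N$, so the coefficient is $0$ as required; for $n \ge 1$ with $\widehat{G}(n) > 0$, the hypothesis forces $N \mid n$, so the coefficient is $1$ as required. The boundary cases where $\widehat{G}(n) = 0$ contribute nothing to the sum regardless of the coefficient value, so they impose no constraint. Therefore $\sigma_N$ meets every equality condition in \eqref{e.enfourier1}, and consequently $I_F(\sigma_N)$ equals the upper bound $\widehat{G}(0) + \sum_{n \ge 1:\, \widehat{G}(n) \ge 0} \widehat{G}(n)$, which by \eqref{e.enfourier1} bounds $I_F(\mu)$ for every probability measure $\mu$. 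Since $\sigma_N$ is itself even (it is invariant under $\theta \mapsto -\theta$ because the point set $\{2\pi k/N\}$ is symmetric), the reduction to even measures causes no loss, and $\sigma_N$ is a maximizer over all probability measures on $\mathbb T$.

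I do not anticipate a serious obstacle here: once the root-of-unity computation of the Fourier coefficients of $\sigma_N$ is in hand, the result is an immediate consequence of the equality analysis already carried out in \eqref{e.enfourier1}. The only point requiring mild care is the justification that the absolute convergence of the cosine series of $G$ legitimizes the interchange of summation and integration in \eqref{e.enfourier} when $\mu = \sigma_N$ is discrete, but since $\sigma_N$ is a finite sum of point masses and the series converges absolutely and uniformly, this interchange is unproblematic.
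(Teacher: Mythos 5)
Your proof is correct and is essentially the paper's own argument: the paper proves the lemma by combining the equality discussion following \eqref{e.enfourier1} with exactly the root-of-unity identity you derive, namely $\int_{\mathbb{T}} \cos(n\theta)\, d\sigma_N(\theta) = 1$ if $N \mid n$ and $0$ otherwise. The only difference is that you make explicit some details the paper leaves implicit (the evenness of $\sigma_N$ and the justification of term-by-term integration against the discrete measure).
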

The proof easily follows from the discussion above and the fact that $$ \int_{ \mathbb{T}}  \cos( n \theta)   d \sigma_N (\theta) = \frac1{N}  \sum_{k=0}^{N-1} \cos \frac{2\pi n k}{N} =   \begin{cases}
1, & \text{ if  $n$ is a multiple of $N$},\\
0, & \text{ otherwise}. \end{cases}$$

\noindent {\it{Remark:}}  the conditions on $\widehat{G} (n)$ in Lemma \ref{l.points} become also necessary,  if we assume that $\sigma_N$ maximizes $I_F$ over all {\it{signed}} (not just non-negative) measures of total mass one. This can be proved by considering signed measures of the form $d\mu= d\sigma - \cos n\theta \,d\sigma$.

\vskip2mm

Returning to  our specific case, $ G( \theta) = F( \cos( \theta)) = \min\{ |\theta|, \pi- |\theta| \}$, we observe that   according to  definition \eqref{e.chepoly} and relation  \eqref{e.cheb}, we have $ \widehat{G}  (n) =  \widetilde{F}_n = - \frac{4}{\pi n^2}$  whenever $ n \equiv 2 \text{ mod } 4$, and $\widehat{G}  (n) = 0$ for all other values of $n\ge 1$.  Therefore, Lemma \ref{l.points} with $N=4$ applies and $\sigma_4  = \frac{1}{4} \big(  \delta_{0} + \delta_{\frac{\pi}{2}} + \delta_{ \pi} + \delta_{  \frac{3 \pi}{2}} \big)$ maximizes $I_F$. Symmetry implies that $\nu_{ONB} =  \frac{1}{2} \big( \delta_0 + \delta_{\frac{\pi}{2}} \big)$ is also a maximizer, i.e. Conjecture \ref{c2} holds on $\mathbb  S^1$. \\

Notice  that alternatively, since $\widehat{G} (n) \le 0$ for all $n\ge 1$,  one could deduce from \eqref{e.enfourier1} that $I_F (\mu ) \le \widehat{G} (0) = I_F (\sigma)$. Therefore $\sigma$ is a maximizer of the energy integral $I_F$, and, due to \eqref{e.all}, so is $\nu_{ONB}$, leading to another proof of Conjecture \ref{c2} (almost identical to \S \ref{s.che}). We, however, wanted to introduce Lemma \ref{l.points} into play. 


\subsection{Stolarsky principle}
Finally, our last approach is based on an intriguing connection between energy optimization and discrepancy.  The classical {\it{Stolarsky invariance principle}}  \cite{stol}  relates the $L^2$  spherical cap discrepancy to a certain discrete energy (sum of pairwise Euclidean distances, i.e. $F(x\cdot y ) = \| x-y\|$).  Some generalizations of this principle were obtained in \cite{D,C}. Moreover, in \cite{C} a version of this principle has been used to characterize the extremizers of the energy  integral and discrete energy with the potential $F(t) = \arccos t$ (i.e. $F(x\cdot y)$ represents the angle or the geodesic distance between $x$ and $y\in \mathbb S^d$). We shall apply this idea to our  problem -- the main task is to find an appropriate version of  $L^2$ discrepancy.


 For   $x \in \mathbb{S}^1$, define the {\it{antipodal quadrants}} in the direction of $x$ as  $Q(x) = \{ y : |x \cdot y| > \frac{\sqrt{2}}{2} \}$, i.e. $Q(x)$ is a union of two symmetric quartercircle arcs  with midpoints  at $x$ and $-x$.  Observe that the size of the  intersection of two such sets depends linearly on the acute angle between their directions: 
 \begin{equation}\label{e.intersect}
 \sigma \big(  Q(y) \cap Q(z) \big) = \frac12 - \frac1{\pi} \arccos |y\cdot z|. 
 \end{equation}
We define  the $L^2$ discrepancy of a point set $Z = \{z_1, ..., z_N \} \subset \mathbb S^1$ with respect to these antipodal quadrants as the quadratic average of the difference between the empirical measure  $\frac{\# \{ Q(x) \cap Z \}}{N}$ and the uniform measure $\sigma \big(Q(x) \big) = \frac12$. 
  \begin{equation}\label{e.disc}
D^2_{L^2, \textup{quad}}(Z) = \int_{ \mathbb{S}^1} \left| \frac{1}{N} \sum_{i=1}^{N} \mathbbm{1}_{Q(x)}(z_i) - \sigma \big(Q(x)\big) \right|^2 d\sigma(x).
 \end{equation}
 More generally, for a measure $\mu \in \mathcal B (\mathbb S^1)$ we can define its discrepancy as 
 \begin{equation}\label{e.discmu}
 D^2_{L^2, \textup{quad}}(\mu) =  \int_{ \mathbb{S}^1} \left| \mu \big( Q(x) \big)  - \sigma \big(Q(x)\big) \right|^2 d\sigma(x),
 \end{equation}
 i.e.  $D^2_{L^2, \textup{quad}}(Z) = D^2_{L^2, \textup{quad}} \big( \frac1{N} \sum \delta_{z_i} \big)$.  
 The following version of the Stolarsky principle holds:
 
 \begin{lemma}[Stolarsky principle for quadrants]\label{l.stol} For any measure $\mu \in \mathcal B (\mathbb S^1)$ we have 
 \begin{equation}\label{e.stol1}
  D^2_{L^2, \textup{quad}}(\mu) = \frac{1}{\pi} \big(   I (\sigma)  - I (\mu) \big) = \frac14 - \frac1{\pi} I(\mu) . 
 \end{equation} 
In particular, for a discrete  point set $Z = \{z_1, ..., z_N \} \subset \mathbb S^1$ 
 \begin{equation}\label{e.stol2}
  D^2_{L^2, \textup{quad}}(Z) =  \frac14 - \frac1{\pi} E (Z) . 
 \end{equation} 
 \end{lemma}
 
 \begin{proof} We use relations \eqref{e.discmu} and \eqref{e.intersect} to obtain
 \begin{equation*}
\begin{aligned}
D^2_{L^2, \textup{quad}} (\mu)  & =   \int\limits_{ \mathbb{S}^1} \int\limits_{ \mathbb{S}^1}\int\limits_{ \mathbb{S}^1}  \mathbbm{1}_{Q(x)} (y) \cdot \mathbbm{1}_{Q(x)}(z) \, d\sigma(x) \, d\mu (y) \, d \mu(z) - \int\limits_{ \mathbb{S}^1} \int\limits_{ \mathbb{S}^1}  \mathbbm{1}_{Q(x)}(y) d \sigma(x) \, d\mu (y)  + \frac{1}{4} \\
& =  \int\limits_{ \mathbb{S}^1} \int\limits_{ \mathbb{S}^1}   \sigma\big( Q(y) \cap Q(z)\big) - \frac{1}{4} = \frac14 - \frac1{\pi} \int\limits_{ \mathbb{S}^1} \int\limits_{ \mathbb{S}^1}  \arccos |y\cdot z| \, d\mu (y) \, d \mu(z) ,
\end{aligned}
\end{equation*}
which proves \eqref{e.stol1}. 
 \end{proof}
 
\noindent  {\it{Remark:}}  discrepancy with respect to similar sets, but with arbitrary apertures, (``wedges") on $\mathbb S^d$ arose in \cite{BL}  in connection to sphere tessellations and one-bit sensing, which led to a Stolarsky principle involving energies with the potential $F(t) = (\arcsin t)^2$. \\

 Our Stolarsky principle \eqref{e.stol1}  proves  that $I(\mu) \le \frac{\pi}4$  and   $\sigma$ maximizes this energy, and by \eqref{e.all} so does $\nu_{ONB}$ proving Conjecture \ref{c2}.  
In the discrete case, \eqref{e.stol2} shows that $E(Z) \le \frac{\pi}4$ for even $N$ and $E(Z) \le \frac{\pi}{4}\cdot \frac{N^2-1}{N^2}$ for odd $N$. It also allows one to characterize the maximizers of $E(Z)$ and prove Conjecture \ref{c1}: \eqref{e.stol2} implies that   maximizing $E(Z)$ is equivalent to minimizing the discrepancy $D^2_{L^2, \textup{quad}}(Z)$. It is easy to see from \eqref{e.disc} that this happens exactly when the following holds:  for $\sigma$-almost every $x\in \mathbb S^d$ the difference between the number of points of $Z$ in $Q(x)$ and in its complement $\big( Q(x) \big)^c$ is zero (when $N$ is even) or $\pm 1$ (when $N$ is odd). Equivalently, this should hold for any $x$ such that the boundary of $Q(x)$ doesn't intersect $Z$, which recovers the characterization in \cite{B}. The  extremizing configurations in Conjecture \ref{c1} obviously satisfy this condition. \\

Unfortunately, none of these approaches immediately extends to higher dimensions: for $d\ge 2$ the function $F(t) = \arccos |t|$ is not negative definite, \eqref{e.all} fails, and absence of analogues of \eqref{e.intersect} prevents one from obtaining a Stolarsky principle. However, we hope that some of these ideas with additional ingredients may bring progress on the conjectures in the future.\\

\noindent \textbf{Acknowledgment.}   This  work is supported by the NSF grant DMS 1665007 (Bilyk) and the NSF Graduate Research  Fellowship 00039202 (Matzke).




\bibliographystyle{amsplain}

\begin{thebibliography}{10}

\bibitem {F} J.J. Benedetto, M. Fickus, Finite Normalized Tight Frames. \textit{Adv. Comput. Math.}, \textbf{18} (2003), no. 2-4, 357--385.

\bibitem {D} D. Bilyk, F. Dai, Geodesic Riesz Energy on the Sphere (2017), arXiv:1612.08442.

\bibitem {C} D. Bilyk, F. Dai, R. Matzke, Stolarsky Principle and Energy Optimization on the Sphere, Constructive Approx. (2018) (published online). 

\bibitem{BL} D. Bilyk, M. Lacey, {One-bit sensing, discrepancy, and Stolarsky's principle},    Sbornik Math., 2017, {\bf{208}} (6), 744--763


\bibitem{BHSbook} S. Borodachov, D. Hardin, E. Saff,  Minimal Discrete Energy on Rectifiable Sets, Springer, Monographs in Math. (to appear). 

\bibitem {E} F. Dai, Y. Xu, Approximation Theory and Harmonic Analysis on Spheres and Balls, Springer Monographs in Mathematics, Springer, 2013.

\bibitem {G} M. Ehler, K.A. Okoudjou, Minimization of the probabilistic p-frame potential. \textit{J. Stat. Plan. Inference} \textbf{142} (3), (2012), 645--659.

\bibitem {A} L. Fejes T\'{o}th, \"{U}ber eine Punktverteilung auf der Kuge, Acta Math. Acad. Sci. Hungar \textbf{10} (1959), 13--19 (in German).

\bibitem {B} F. Fodor, V. V\'{i}gh, and T. Zarn\'{o}cz, On the angle sum of line, Arch. Math \textbf{106} (2016), 91--100.

\bibitem {H} F. Petrov (https://mathoverflow.net/users/4312/fedor-petrov), maximum sum of angles between $n$ lines, URL (version: 2014-07-09): https://mathoverflow.net/q/173712 .

\bibitem{stol} K. B. Stolarsky. Sums of distances between points on a sphere. II. {\it Proc. Amer. Math. Soc.}
{\bf 41}, (1973) 575--582.




\end{thebibliography}

\end{document}